\newcommand{\be}{\begin{equation}}
\newcommand{\ee}{\end{equation}}
   \edef\Gin@extensions{\Gin@extensions,.mps}
\newtheorem{theorem}{Theorem}
\newtheorem{corollary}{Corollary}
\newtheorem{proposition}{Proposition}
\theoremstyle{definition}
\theoremstyle{remark}
\newtheorem{remark}{Remark}
\definecolor{DarkBlue}{rgb}{0,0.1,0.55}
\numberwithin{equation}{section}
\newcommand {\hide}[1]{}
\newcommand {\junk}[1]{}
\newcommand {\R} {\mathbb{R}}
\newcommand {\PP}     {\mathbb{P}} 
\def\addots{\mathinner{\mkern1mu
\raise1pt\vbox{\kern7pt\hbox{.}}
\mkern2mu\raise4pt\hbox{.}\mkern2mu
\raise7pt\hbox{.}\mkern1mu}}
\newcommand {\RP}   {\mathbb{R}\mathrm{P}}
\begin{document}

\title[Zeroes of polynomials on definable hypersurfaces]{Zeroes of polynomials on definable hypersurfaces: pathologies exist, but they are rare}

\author{Saugata Basu}
\address{Department of Mathematics, Purdue University, West Lafayette, IN 47906, U.S.A.}
\email{sbasu@math.purdue.edu}

\author{Antonio Lerario}
\address{SISSA (Trieste) and Florida Atlantic University}
\email{lerario@sissa.it}

\author{Abhiram Natarajan}
\address{Department of Computer Science, Purdue University, West Lafayette, IN 47906, U.S.A.}
\email{nataraj2@purdue.edu}

\subjclass{Primary 14F25; Secondary 68W30}
\date{\textbf{\today}}


\keywords{}

\thanks{Basu was partially supported by NSF grants CCF-1618918 and DMS-1620271.}

\begin{abstract}Given a sequence $\{Z_d\}_{d\in \mathbb{N}}$ of smooth and compact hypersurfaces in $\R^{n-1}$, we prove that (up to extracting subsequences) there exists a regular definable hypersurface $\Gamma\subset \RP^n$ such that each manifold $Z_d$ appears as a component of the zero set on $\Gamma$ of some polynomial of degree $d$. (This is in sharp contrast with the case when $\Gamma$ is algebraic, where for example the homological complexity of the zero set of a polynomial $p$ on $\Gamma$ is bounded by a polynomial in $\deg(p)$.) 

More precisely, given the above sequence of hypersurfaces, we construct a regular, compact, definable hypersurface $\Gamma\subset \RP^{n}$ containing a subset $D$ homeomorphic to a disk, and a family of polynomials $\{p_m\}_{m\in \mathbb{N}}$ of degree $\deg(p_m)=d_m$ such that $(Z(p_m)\cap D, D)\sim (Z_{d_m}, \R^{n-1}),$ i.e. the zero set of $p_m$ in $D$ is isotopic to $Z_{d_m}$ in $\R^{n-1}$.

This says that, up to extracting subsequences, the intersection of $\Gamma$ with a hypersurface of degree $d$ can be as complicated as we want. We call these ``pathological examples''.

In particular, we show that for every $0 \leq k \leq n-2$ and every sequence of natural numbers $a=\{a_d\}_{d\in \mathbb{N}}$ there is a regular, compact and definable hypersurface $\Gamma\subset \RP^n$,  a subsequence $\{a_{d_m}\}_{m\in \mathbb{N}}$ and homogeneous polynomials $\{p_{m}\}_{m\in \mathbb{N}}$ of degree $\deg(p_m)=d_m$ such that:
\be \label{eq:pathintro} b_k(\Gamma\cap Z(p_m))\geq a_{d_m}.\ee
(Here $b_k$ denotes the $k$-th Betti number.) This generalizes a result of Gwo\'zdziewicz, Kurdyka and Parusi\'nski \cite{gwozdziewicz1999number}.

On the other hand, for a given definable $\Gamma$ we show that the Fubini-Study measure, in the gaussian space of polynomials of degree $d$, of the set $\Sigma_{d_m,a,  \Gamma}$ of polynomials verifying \eqref{eq:pathintro} is positive, but there exists a contant $c_\Gamma$ such that this measure can be bounded by:
\be 
0<\PP(\Sigma_{d_m, a, \Gamma})\leq \frac{c_{\Gamma} d_m^{\frac{n-1}{2}}}{a_{d_m}}.
\ee
This shows that the set of ``pathological examples'' has ``small'' measure (the faster $a$ grows, the smaller the measure and pathologies are therefore rare). In fact we show that given $\Gamma$, for most polynomials a B\'ezout-type bound holds for the intersection $\Gamma\cap Z(p)$: for every $0\leq k\leq n-2$ and $t>0$:
\be
\label{eq:curve}\PP\left(\{b_k(\Gamma\cap Z(p))\geq t d^{n-1} \}\right)\leq \frac{c_\Gamma}{td^{\frac{n-1}{2}}}.
\ee
\end{abstract}

\maketitle


\section{Introduction}

\subsection{Existence of pathologies}A classical fact from algebraic geometry states that given two real algebraic curves $\Gamma$ and $Z$, if their intersection is finite, it consists of at most $\deg(\Gamma)\cdot\deg(Z)$ many points (this is a consequence of B\'ezout's Theorem). In particular, if we fix the first curve, we can say that there is a function $\beta_{\Gamma,0}:\mathbb{N}\to \mathbb{N}$ such that for every polynomial $p$ of degree $d$, if $\Gamma$ and $Z(p)=\{p=0\}$ intersect at finitely many points, then:
\be\label{eq:basic} \#(\Gamma\cap Z(p))\leq \beta_{\Gamma, 0}(d)=\deg(\Gamma) \cdot d.\ee
If we leave the \emph{semialgebraic} world, but still remain in the \emph{definable} setting, still such a function $\beta_{\Gamma, 0}$ exists, but in general nothing can be said about its behavior. Here by definable we mean the class of definable sets in an o-minimal expansion of the real numbers, for example the o-minimal structure generated by subanalytic functions. (We refer the reader
who is unfamiliar with o-minimal geometry to \cite{Dries,Michel2} for easy to read introductions to the topic.)

In this direction Gwo\'zdziewicz, Kurdyka and Parusi\'nski \cite{gwozdziewicz1999number} have proved that for every sequence $\{a_d\geq0\}_{d\in \mathbb{N}}$ of natural numbers there exists a definable curve $\Gamma$, a subsequence $\{a_{d_m}\}_{m\in \mathbb{N}}$ and a sequence $\{p_m\}_{m\in \mathbb{N}}$ of polynomials of degree $\deg(p_m)=d_m$ such that:
\be  \#(\Gamma\cap Z(p_m))\geq a_{d_m}.\ee
(In this paper we will show that the curve $\Gamma\subset \RP^2$ can be taken to be regular, definable and compact and that the polynomials $p_{d_m}$ can be chosen in such a way that the intersection $\Gamma\cap Z(p_{d_m})$ is transversal, i.e. stable under small perturbations of the polynomial.)

In particular this shows that, for a fixed definable $\Gamma\subset \RP^2$, there is in general no upper bound on the number of zeroes of a polynomial $p$ on $\Gamma$ which is polynomial in $\deg(p)$. Generalizing this we will show that in higher dimensions the situation is even more interesting. 

To state our first result, we will say that two manifold pairs $(X, M)$ and $(Y, N)$ are diffeomorphic if there exists a diffeomorphism $\psi:M\to N$ such that $\psi(X)=Y$; in this case we write $(X, M)\sim (Y,N)$. This notion essentially says that $X$ and $Y$ are diffeomorphic and, up to a diffeomorphim, they are embedded in their ambient spaces in the same way. 

Of course, when $\Gamma$ is an algebraic hypersurface and $p$ is a polynomial, there are restrictions on the possible pairs $(Z(p)\cap \Gamma, \Gamma)$ (for example Betti numbers of $Z(p)\cap \Gamma$ grow at most as a polynomial in $\deg(p)$). Pick now a sequence of smooth and compact hypersurfaces $Z_1, Z_2, \ldots \subset \R^{n-1}$. Our first Theorem says that (up to extracting subsequences) there exists a regular definable hypersurface $\Gamma\subset \RP^n$ such that each manifold $Z_d$ appears as a component of the zero set on $\Gamma$ of some polynomial of degree $d$. More precisely, we will prove the following.
\begin{theorem}[Existence of pathologies]
\label{thm:arbitrary-intersection}
Let $\{Z_d\}_{d\in \mathbb{N}}$ be a sequence of smooth, compact hypersufaces embedded in $\R^{n-1}$. There exist a regular\footnote{Throughout the paper the word ``regular'' will mean ``of regularity class $C^k$ for some fixed $k\geq 2$''.}, compact, definable hypersurface $\Gamma\subset \RP^n$, a disk $D\subset \Gamma$ and a sequence $\{p_m\}_{m\in \mathbb{N}}$ of homogeneous polynomials of degree $\deg(p_m)=d_m$ such that the intersection $Z(p_m)\cap \Gamma$ is transversal and:
\be (Z(p_m)\cap D, D)\sim (Z_{d_m}, \R^{n-1})\quad \textrm{for all $m\in \mathbb{N}$}.\ee
\end{theorem}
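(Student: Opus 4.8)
The plan is to separate the analytic heart of the statement from the soft differential-topological bookkeeping. The heart is the existence of a single ``wild'' definable hypersurface on which polynomials of bounded degree already cut out zero sets of prescribed topology; this is where one generalizes the one-dimensional example of Gwo\'zdziewicz--Kurdyka--Parusi\'nski. The guiding point is that, outside the algebraic world, there is no B\'ezout- or Milnor--Thom-type constraint: for a fixed definable $\Gamma$ and each fixed $d$ the family $\{Z(p)\cap\Gamma:\deg p\le d\}$ is definable and hence realizes only finitely many isotopy types, but that finite number is completely unconstrained and, over $d$, may be arranged to exhaust any prescribed sequence of types.

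First I would build smooth models of the $Z_d$. Since $Z_d\subset\R^{n-1}$ is a smooth compact hypersurface, it is the zero set (with $0$ a regular value) of a $C^k$ function equal to a small positive constant $\varepsilon_d$ outside a ball; writing this function as $f_d$ we may assume $\|f_d\|_{C^k}\to 0$, $Z(f_d)\subset B^{n-1}$, $f_d>0$ off $B^{n-1}$, and $(Z(f_d),\R^{n-1})\sim(Z_d,\R^{n-1})$. Everything then reduces to: there are a regular, compact, definable hypersurface $\Gamma\subset\RP^n$, an open disk $D\subset\Gamma$ with a definable diffeomorphism onto the graph of a $C^k$ definable function $G$ over $B^{n-1}$, and an infinite set of degrees $\{d_m\}$, such that for each $m$ the difference $G-R_{d_m}$ is $C^1$-close on $2B^{n-1}$ to $f_{d_m}$ for some polynomial $R_{d_m}$ with $\deg R_{d_m}\le d_m$. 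Granting this, take $p_m$ to be the degree-$d_m$ homogenization of $t-R_{d_m}(y)$ in affine coordinates $(y,t)$ adapted to the chart of $D$; transporting through the graph diffeomorphism identifies $Z(p_m)\cap D$ with $Z(G-R_{d_m})$, which is $C^1$-close to $Z(f_{d_m})$ and (since $f_{d_m}>0$ near $\partial B^{n-1}$) contained in $B^{n-1}$, so Thom's isotopy lemma gives $(Z(p_m)\cap D,D)\sim(Z(f_{d_m}),\R^{n-1})\sim(Z_{d_m},\R^{n-1})$.

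The construction of $G$ is the crux. One wants a definable $G$ on $B^{n-1}$ such that for infinitely many $d$ the linear span of $\{y^\alpha G(y)^j:|\alpha|+j\le d\}$ --- of dimension only $\binom{n+d}{n}$ --- nevertheless contains a function $C^1$-close to $f_d$. That a low-dimensional space of functions can realize zero sets of arbitrarily complicated topology is exactly what fails for algebraic $\Gamma$ (where the spanning functions are polynomials and B\'ezout applies) and what one arranges for definable $\Gamma$ by letting the powers $G^j$ oscillate wildly enough. One natural way to organize this is a telescoping/approximation scheme: prescribe $G$ block by block in degrees between consecutive $d_m$ so that subtracting a suitable polynomial $R_{d_m}$ of degree $\le d_m$ leaves a function minutely close to $f_{d_m}$, while keeping $G$ inside a class that generates an o-minimal structure (for instance one of the o-minimal expansions built from quasianalytic classes, following van den Dries--Speissegger and Rolin--Speissegger--Wilkie; the o-minimal structure may well have to be chosen in function of the sequence $\{Z_d\}$). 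One finally implants the graph of $G$ into a round sphere $S^{n-1}\subset\R^n\subset\RP^n$ via a definable chart of a spherical cap and a definable cutoff, obtaining a hypersurface $\Gamma$ that is compact, definable, and of class $C^k$ for the fixed $k\ge 2$, with $D$ the modified cap.

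It remains to make the intersection $Z(p_m)\cap\Gamma$ transversal. On $D$ this is automatic, $0$ being a regular value of $G-R_{d_m}$ by $C^1$-closeness to $f_{d_m}$; on $\Gamma\setminus D$ one perturbs $p_m$ to a nearby $\tilde p_m$ of the same degree $d_m$, invoking the definable Sard theorem for the evaluation map on $\Gamma$ times the space of degree-$d_m$ forms (which has $0$ as a regular value, since adding a suitable nonvanishing section moves the value at any given point), and observing that a sufficiently small perturbation does not change the isotopy type of the zero set inside $D$. The main obstacle is clearly the construction of $G$ and $\Gamma$: simultaneously realizing the prescribed topological types by bounded-degree polynomials, keeping $G$ definable in an o-minimal structure, and keeping $\Gamma$ regular and compact --- which amounts to balancing approximation accuracy, the decay of the blocks, and the growth condition underlying o-minimality. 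Everything else is standard.
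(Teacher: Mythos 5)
Your overall architecture coincides with the paper's: realize a piece of $\Gamma$ as the graph of a single definable function $G$ built out of blocks indexed by the degrees $d_m$, take $p_m$ to be the homogenization of $t-R_{d_m}(y)$ with $R_{d_m}$ a partial sum of the blocks, use Thom's isotopy lemma to see that the residual $G-R_{d_m}$ still cuts out $Z_{d_m}$, and finish with a density argument for transversality. The reduction, the use of the models $f_d$, and the transversality step are all fine. The problem is that the step you yourself call ``the crux'' --- the construction of $G$ --- is left as a plan rather than carried out, and your hedging about it (that the o-minimal structure ``may well have to be chosen in function of the sequence $\{Z_d\}$'', with an appeal to quasianalytic classes) shows that the specific mechanism is missing, not merely suppressed. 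As written, the proposal describes what must be arranged without showing it can be arranged.

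The paper fills this gap with three concrete devices. First, Seifert's approximation theorem: each $Z_{d}$ is, up to diffeomorphism of pairs inside a fixed disk $D_1$ avoiding the origin, the regular zero set of a \emph{polynomial} $q$. Second, the block is taken to be $Q=c\,\|x\|^{2N}q$ with $c>0$ small and $N$ large: this does not change the zero set on $D_1$, but it pushes all monomials of the block into degrees $>N$ (so consecutive blocks have disjoint, increasing monomial supports and each partial sum is a polynomial of the announced degree $d_m$) and makes $\|Q\|_{C^1}$ as small as desired. Third, the constants are chosen so that the $(k+1)$-st block has $C^1$-norm at most $\min\{\delta_1,\dots,\delta_k\}2^{-(k+1)}$, where $\delta_j$ is the Thom-isotopy threshold of the $j$-th block, \emph{and} so that the series converges on a strictly larger disk $D_2\supset\overline{D_1}$. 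The first property guarantees that the tail beyond each partial sum is below the relevant Thom threshold, so $Z(G-R_{d_m})$ is isotopic to the zero set of the $(m+1)$-st block alone, hence to $Z_{d_m}$; the second makes $G|_{\overline{D_1}}$ a restricted analytic function, hence definable in $\R_{\mathrm{an}}$ --- one fixed o-minimal structure works for every input sequence, and no quasianalytic machinery or sequence-dependent structure is needed. Supplying these three ingredients is exactly what turns your telescoping scheme into a proof.
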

\begin{remark}Note that in the case $n=2$ this implies the statement of \cite{gwozdziewicz1999number}. In fact, we can take for $Z_d=\{x_1, \ldots, x_{a_d}\} \subset \R$ a set consisting of $a_d$ many points. Then we find a smooth definable curve $\Gamma\subset \RP^2$, an interval $I\subset \Gamma$ and a sequence of polynomials $p_m$ fo degree $d_m$ such that the manifold pairs $(Z(p_m)\cap I, I)$ and $(\{x_1, \ldots, x_{a_{d_m} }\}, \R)$ are diffeomorphic, in particular $Z(p_m)\cap \Gamma$ consists of at least $a_{d_m}$ many points.
\end{remark}
In higher dimensions we can measure the complexity of a manifold by its Betti numbers. If $\Gamma\subset \RP^n$ is a regular, compact, definable hypersurface, for every $0\leq k\leq n-2$ let $\beta_{\Gamma, k}:\mathbb{N}\to\mathbb{N}$ be the function:
\be \beta_{\Gamma, k}(d)=\max_{\deg(p)=d}b_k(\Gamma\cap Z(p))\ee
(here $b_k$ denotes the $k$-th Betti number, and the maximum is taken over nondegenerate intersections). When $\Gamma$ is algebraic, we have 
\be\label{eq:bettialg}\beta_{\Gamma, k}(d)\leq c_\Gamma\cdot d^{n-1}\quad \textrm{(algebraic case)}\ee
for some constant depending on the degree of $\Gamma$ (this estimate actually requires some nontrivial work if $\Gamma$ is singular, and it is proved in \cite[Theorem 6.4]{basu2015multi}). On the other hand, as for the case of curves, there is no way to control the behavior of this function for a general definable $\Gamma:$ in fact, given a sequence $\{a_d\}_{d\in \mathbb{N}}$, if we chose a sequence of hypersurfaces $\{Z_d\}$ with $b_k(Z_d)\geq a_d$, for the hypersurface $\Gamma$ provided by Theorem \ref{thm:arbitrary-intersection} the function $\beta_{\Gamma, k}$ grows at least as fast as $a_{d_m}$. 

\begin{remark}\label{rem:partition}Estimates like \eqref{eq:basic} are basic building blocks in recent advances in incidence
problems in the area of discrete geometry driven by the polynomial partitioning
method \cite{Guth-Katz} (see for example \cite[Theorem A.2]{Solymosi-Tao}). Recently, using different techniques
such incidence results have
been generalized from the semi-algebraic case to more general situations --
namely, incidences between definable sets over arbitrary o-minimal expansions of
$\R$, see \cite{Basu-Raz, Galvin-Chernikov-Starchenko}. 
In order to
extend the polynomial partitioning technique to the o-minimal situation
(as noted in \cite{OWF-report})
it is important to study the function $\beta_{\Gamma,k}$
where $\Gamma$ is now an arbitrary definable hypersurface in an o-minimal structure
(rather than just semi-algebraic).
On one hand Theorem \ref{thm:arbitrary-intersection} seems to rule out the use of polynomial partitioning for incidence problems involving definable
sets in arbitrary o-minimal structures, but on the other hand we also prove (see Theorem \ref{thm:estimate} below)  
that the pathological behavior  exhibited in Theorem 
\ref{thm:arbitrary-intersection} is very rare, and this gives some hope that a modified version of the technique can still be applicable to incidence questions.  
\end{remark}

\subsection{Pathologies are rare}

Given $\Gamma$, it is natural to ask how ``stable'' are the polynomials having the ``pathological'' behaviour of Theorem \ref{thm:arbitrary-intersection}? In other words, if it is certainly true that nothing can be said on the function $\beta_{\Gamma, k}$ that bounds the Betti numbers of transversal intersection between a definable hypersurface $\Gamma$ and the zero set of a polynomial in terms of the degree of the polynomial, is it possible to say that for ``most polynomials'' a polynomial upper bound still holds true for the Betti numbers? Our second result gives an affirmative answer to this question, after the naive idea of ``most polynomials'' is made precise.

To make these questions precise, on the space $W_{n,d}$ of homogeneous polynomials of degree $d$ in $n+1$ variables we introduce a natural gaussian measure, called the \emph{Kostlan} measure, defined by choosing each coefficient of 
\be p=\sum_{|\alpha|=d}\xi_\alpha {d \choose \alpha}^{1/2}x_0^{\alpha_0}\cdots x_n^{\alpha_n}\ee
 independently from a standard Gaussian distribution (i.e. $\xi_\alpha\sim N(0,1)$). This measure is the restriction, to the space of real polynomials, of the Fubini-Study measure.
 
The scaling coefficients ${d\choose \alpha}^{1/2}$ are chosen in such a way that the resulting probability distribution is invariant under orthogonal change of variables (there are no preferred points or direction in $\RP^n$, where zeroes of $p$ are naturally defined). Moreover, if we extend this probability distribution to the whole space of complex polynomials, by replacing \emph{real} with \emph{complex} gaussian variables, it can be shown that this extension is the \emph{unique} Gaussian measure which is invariant under unitary change of variables. This makes real Kostlan polynomials a natural object of study. (This model for random polynomials received a lot of attention since pioneer works of Edelman, Kostlan, Shub and Smale \cite{EdelmanKostlan95, shsm, EKS, Ko2000, ShSm3, ShSm1} on random polynomial systems solving.) 

The next Theorem estimates the size of the set of polynomials whose restriction to a definable hypersurface $\Gamma\subset \RP^n$ have a behaviour that deviates from the algebraic case estimate \eqref{eq:bettialg}.

\begin{theorem}\label{thm:estimate}Let $\Gamma\subset \mathbb{R}\emph{P}^n$ be a regular and compact hypersurface, and let $p$ be a random Kostlan polynomial of degree $d$. Then there exists a constant $c_\Gamma$ such that for every $0\leq k\leq n-2$ and  for every $t>0$
\be\label{eq:curve}\PP\{b_k(\Gamma\cap Z(p))\geq t d^{n-1} \}\leq \frac{c_\Gamma}{td^{\frac{n-1}{2}}}.\ee
\end{theorem}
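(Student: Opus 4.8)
The plan is to bound the expectation $\Exp\, b_k(\Gamma\cap Z(p))$ by a quantity of order $d^{(n-1)/2}$ and then conclude via Markov's inequality, since $\PP\{b_k(\Gamma\cap Z(p))\geq t d^{n-1}\} = \PP\{b_k \geq (t d^{(n-1)/2})\cdot d^{(n-1)/2}\}\leq \frac{\Exp\, b_k}{t d^{n-1}}\leq \frac{c_\Gamma d^{(n-1)/2}}{t d^{n-1}}$. So the real content is the expectation bound $\Exp\, b_k(\Gamma\cap Z(p))\leq c_\Gamma d^{(n-1)/2}$. First I would reduce the Betti number to a count of critical points: choosing a suitable fixed Morse function $f$ on $\Gamma$ (or, more robustly, integrating over a random linear projection / a random great-circle distance function to make the construction uniform and definable-friendly), one has $b_k(\Gamma\cap Z(p))\leq \#\{x\in \Gamma\cap Z(p) : x \text{ is a critical point of } f|_{\Gamma\cap Z(p)}\}$ whenever the intersection is transversal and $f$ restricted to it is Morse; for the generic Kostlan $p$ this holds almost surely. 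Summing over $k$ only costs a factor of $n$, so it suffices to bound the expected total number of such critical points.

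The second step is an integral-geometry / Kac–Rice computation. Writing the critical point condition on the smooth $(n-2)$-manifold $\Gamma\cap Z(p)$ as the vanishing of an appropriate map — namely $p=0$ together with the condition that $\grad f$ lies in the span of $\grad p$ and the normal directions of $\Gamma$ — one expresses the expected number of solutions as a Kac–Rice integral $\int_\Gamma \Exp\big[\,|\det(\text{Jacobian})|\;\big|\;p(x)=0,\dots\big]\, \rho(x)\, dx$ against the Gaussian density $\rho$ of the conditioning. The Kostlan invariance is what makes this tractable: after using orthogonal invariance to fix the point $x$, the joint law of $(p(x),\grad p(x),\Hess p(x))$ restricted to $T_x\Gamma$ has an explicit Gaussian form with variances governed by the reproducing kernel of $W_{n,d}$, whose derivatives at the diagonal scale as powers of $d$. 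One checks that along $\Gamma\cap Z(p)$ the relevant $(n-2)$-dimensional second fundamental form / Hessian block carries a factor $d$ from the $p$-contribution but that the codimension-one constraint and the normalization of the length measure on a degree-$d$ zero set combine so that the leading power is $d^{(n-1)/2}$ rather than $d^{n-1}$; this is the same phenomenon by which the expected volume of $Z(p)$ itself (and the expected number of its critical points with respect to a fixed function) is $O(d^{(n-1)/2})$ in the Kostlan model, as opposed to the worst-case $d^{n-1}$. The constant absorbs the $C^k$-geometry of $\Gamma$ (curvature bounds, volume), hence the notation $c_\Gamma$.

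A cleaner alternative, which I would actually prefer to write up, is to avoid a bare-hands Kac–Rice estimate on $\Gamma$ and instead use a ``random algebraic model'' comparison: perturb $\Gamma$ slightly so that a tubular neighborhood of it is cut out by equations to which one can apply the known expected-Betti-number bounds for random sections (e.g. the quantitative semialgebraic Morse-theoretic estimates, or the results on the expected topology of $Z(p)\cap M$ for a fixed submanifold $M$ in the Kostlan ensemble). Since $\Gamma$ is merely $C^k$ and definable, not algebraic, one cannot literally quote a projective result; but locally, on the disk charts covering $\Gamma$, one can dominate $\Gamma\cap Z(p)$ by the zero set of $p$ in the ambient chart together with a fixed smooth hypersurface, and then invoke the local integral-geometry formula for $\Exp\,\#(\text{critical points in a ball})$, which is again $O(d^{(n-1)/2})$ per unit-volume ball with constant controlled by the local geometry. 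Covering $\Gamma$ by $O(1)$ such charts and summing gives the bound.

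The main obstacle I anticipate is making the critical-point bound on $b_k$ genuinely uniform over the random $p$ — that is, showing that for \emph{almost every} $p$ the intersection $\Gamma\cap Z(p)$ is a smooth manifold and some fixed (or jointly-random) Morse function restricts to it as a Morse function, so that Morse inequalities apply without a bad exceptional set, and simultaneously that the Kac–Rice integrand is integrable (no concentration of the conditional Gaussian near a degenerate stratum that would blow up $\Exp|\det|$). The transversality part is a standard Bertini-Sard argument using that the Kostlan linear system is base-point-free and $1$-jet ample; the integrability part needs the conditional covariance of the jet to stay uniformly nondegenerate along $\Gamma$, which follows from the positivity of the Kostlan kernel and its derivatives on the diagonal together with compactness of $\Gamma$. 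Getting the power of $d$ exactly right — and in particular checking that no hidden factor of $d^{(n-2)/2}$ sneaks in from the $(n-2)$-dimensional Hessian along the intersection — is the delicate bookkeeping step, and is where the interplay between the $d$-scaling of the second fundamental form of a random degree-$d$ hypersurface and the $d^{-1/2}$-scaling of its natural length element does the work.
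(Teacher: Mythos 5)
Your proposal follows essentially the same route as the paper: Markov's inequality applied to the bound $\Exp\, b_k(\Gamma\cap Z(p))\leq c_\Gamma d^{\frac{n-1}{2}}$, which in turn is obtained exactly as you describe — Morse inequalities for a fixed Morse function $f$ on $\Gamma$ restricted to the (almost surely transversal) intersection, followed by a Kac--Rice computation of the expected number of critical points that exploits the orthogonal invariance of the Kostlan ensemble to reduce to an explicit Gaussian/GOE calculation at a single point. The paper does not need your ``cleaner alternative'' via comparison with algebraic models; the direct Kac--Rice argument is carried out in full and yields the leading constant $\frac{|\Gamma|}{\pi}\frac{1}{(2\pi)^{\frac{n-2}{2}}}\Exp|\det Q_{n-2}|$.
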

Combining this result with the construction of Theorem \ref{thm:arbitrary-intersection} we obtain the following estimate for the gaussian volume of the set of ``pathological'' polynomials. The lower bounds follows from the fact that the intersection $Z(p_m)\cap \Gamma$ produced in Theorem \ref{thm:arbitrary-intersection} is transversal (hence stable under small perturbations of the polynomial $p_m$). 
\begin{corollary}[Pathologies are rare]Given a sequence of natural numbers $\{a_d\}_{d\in \mathbb{N}}$ let $\{Z_{d}\}_{d\in \mathbb{N}}$ be a sequence of hypersurfaces with $b_k(Z_d)\geq a_d$ for all $d\in \mathbb{N}$. Consider the hypersurface $\Gamma\subset \mathbb{R}\textrm{\emph{P}}^n$ provided by Theorem \ref{thm:arbitrary-intersection}. Then, for some constant $c_\Gamma>0$:
\be 0<\PP\{b_k(\Gamma\cap Z(p))\geq a_{d_m}\}\leq \frac{c_\Gamma d_m^{\frac{n-1}{2}}}{a_{d_m}}. \ee
\end{corollary}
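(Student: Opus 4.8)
The final statement is a formal consequence of Theorem~\ref{thm:arbitrary-intersection} and Theorem~\ref{thm:estimate}, both of which I take as given, so the plan is short. First I would record that sequences $\{Z_d\}$ with $b_k(Z_d)\geq a_d$ exist (for instance take $Z_d$ to be a disjoint union of $a_d$ copies of the standard hypersurface $S^k\times S^{n-2-k}\hookrightarrow\R^{n-1}$, or of $S^{n-2}$ when $k=0$), and I may assume $a_{d_m}\geq 1$ for every $m$, since otherwise the upper bound is vacuous. Feeding $\{Z_d\}$ into Theorem~\ref{thm:arbitrary-intersection} produces a regular compact definable $\Gamma\subset\RP^n$, an open disk $D\subset\Gamma$, a subsequence $\{d_m\}$ and homogeneous polynomials $p_m$ of degree $d_m$ with $Z(p_m)$ transversal to $\Gamma$ and $(Z(p_m)\cap D,D)\sim(Z_{d_m},\R^{n-1})$. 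Since $D$ is open in $\Gamma$ while $Z_{d_m}$ is compact, the set $Z(p_m)\cap D$ is both open in $\Gamma\cap Z(p_m)$ (as $D$ is open in $\Gamma$) and compact, hence closed; being clopen it is a union of connected components of $\Gamma\cap Z(p_m)$, so $b_k(\Gamma\cap Z(p_m))\geq b_k(Z(p_m)\cap D)=b_k(Z_{d_m})\geq a_{d_m}$.

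For the lower bound I would use stability of transversal intersections. Fixing an affine chart, the map $q\mapsto q|_\Gamma$ is linear, hence continuous, from the finite-dimensional space $W_{n,d_m}$ to $C^1(\Gamma)$, and since $\Gamma$ is compact and of class $C^k$ with $k\geq 2$, transversality to $\Gamma$ is an open condition in the $C^1$ topology; therefore there is an open ball $U_m\ni p_m$ in $W_{n,d_m}$ with $Z(q)$ transversal to $\Gamma$ for all $q\in U_m$. By the standard parametric isotopy lemma (Thom), $\Gamma\cap Z(q)$ is isotopic in $\Gamma$ to $\Gamma\cap Z(p_m)$ for every $q\in U_m$, so $b_k(\Gamma\cap Z(q))=b_k(\Gamma\cap Z(p_m))\geq a_{d_m}$ on $U_m$. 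Hence $\Sigma_{d_m,a,\Gamma}:=\{q\in W_{n,d_m}: b_k(\Gamma\cap Z(q))\geq a_{d_m}\}$ contains the nonempty open set $U_m$, and since the Kostlan measure is a Gaussian measure with nondegenerate covariance it has full support, whence $\PP(\Sigma_{d_m,a,\Gamma})\geq\PP(U_m)>0$.

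For the upper bound I would simply apply Theorem~\ref{thm:estimate} to $\Gamma$ with $t=t_m:=a_{d_m}/d_m^{\,n-1}>0$ (legitimate since $a_{d_m}\geq1$), obtaining
\[
\PP(\Sigma_{d_m,a,\Gamma})=\PP\bigl\{b_k(\Gamma\cap Z(p))\geq t_m d_m^{\,n-1}\bigr\}\leq\frac{c_\Gamma}{t_m\,d_m^{(n-1)/2}}=\frac{c_\Gamma\, d_m^{(n-1)/2}}{a_{d_m}},
\]
which, combined with the previous paragraph, is exactly the asserted chain $0<\PP(\Sigma_{d_m,a,\Gamma})\leq c_\Gamma d_m^{(n-1)/2}/a_{d_m}$.

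Once Theorems~\ref{thm:arbitrary-intersection} and~\ref{thm:estimate} are in hand there is essentially no obstacle left; the only points requiring care are in the lower bound, namely that the coefficients-to-restriction map is continuous in a topology fine enough to make transversality persist (this is where compactness of $\Gamma$ is used) and that persistence of transversality upgrades to an ambient isotopy, hence to equality of Betti numbers. The genuinely hard input is Theorem~\ref{thm:estimate} itself, which I am assuming; were I to prove it I would deduce it from the integral-geometric estimate $\Exp\,b_k(\Gamma\cap Z(p))\leq c_\Gamma\, d^{(n-1)/2}$ via Markov's inequality, bounding the expectation by a Kac--Rice count of the critical points of a generic linear projection restricted to $\Gamma\cap Z(p)$ and using that a Kostlan polynomial of degree $d$ has natural length scale $d^{-1/2}$, so that the Kac--Rice density integrated over the fixed compact $\Gamma$ is of order $d^{(n-1)/2}$; making that density bound uniform in $d$ over all of $\Gamma$ would be the main obstacle there.
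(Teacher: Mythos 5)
Your proof is correct and follows essentially the same route as the paper, which justifies the corollary in one sentence: the upper bound is Theorem~\ref{thm:estimate} applied with $t=a_{d_m}/d_m^{n-1}$ (Markov), and the strict positivity comes from the transversality of $Z(p_m)\cap\Gamma$, which makes the pathological set contain a nonempty open neighborhood of $p_m$ on which the nondegenerate Gaussian measure is positive. Your additional details (the clopen argument showing $Z(p_m)\cap D$ is a union of components of $\Gamma\cap Z(p_m)$, and the openness-plus-full-support justification of the lower bound) correctly fill in steps the paper leaves implicit.
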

\begin{remark}The conclusion of Theorem \ref{thm:estimate} follows after combining Markov's inequality with the following fact (proved in Proposition \ref{prop:bettimean}): there exists a universal constant $c_{k,n}>0$ such that for every $\Gamma\subset \RP^n$ regular, definable, compact hypersurface
\be \label{eq:bettimean}\mathbb{E}b_k(\Gamma\cap Z(p))\leq |\Gamma| c_{k, n}d^{\frac{n-1}{2}}+O(d^{\frac{n-2}{2}}),\ee
where $|\Gamma|$ denotes the volume of $\Gamma$, induced by restricting the riemannian metric of $\RP^n$, and the implied constants in the $O(d^{\frac{n-2}{2}})$ depends on $\Gamma$. 
This result could be potentially useful in the study of incidence questions over o-minimal structures (cf. Remark \ref{rem:partition}).
\end{remark}
\begin{remark}The content of \eqref{eq:bettimean} reveals an interesting and surprising property of the space of polynomials: by Theorem \ref{thm:arbitrary-intersection} there is a priori no upper bound on the homological complexity of $\Gamma\cap Z(p)$ (as a function of $d=\mathrm{deg}(p)$), but \emph{on average} we cannot exceed a polynomial bound. Here is an example from \cite{flats} of a similar phenomenon that appears in the study of random enumerative geometry. If $X_1,\ldots, X_4$ are boundaries of smooth convex bodies in $\RP^3$, one can ask for the number $\ell(X_1, \ldots, X_4)$ of lines that are simultaneously tangent to all of them. This number is finite if the convex bodies are in general position in the projective space, but it can be arbitrarily large: for every $m>0$ one can find $X_1, \ldots, X_4\subset \RP^3$ in general position such that there are at least $m$ lines tangent to all of them. On the other hand (here is the surprising thing) there exists a constant $c>0$, independent of the convex bodies, such that if we now average over all their possible configurations using the action of the orthogonal group $O(4)$ on $\RP^3$, we get $\mathbb{E}_{g_1 ,\ldots, g_4\in O(4)} \ell(g_1X_1, \ldots, g_4X_4)=c.$ Here again there is no a priori upper bound, but there is an upper bound \emph{on average}.
\end{remark}
\begin{remark}[The zero-dimensional case]Another case of interest, on which we can say more, is the case when $\Gamma\subset \RP^n$ is $k$-dimensional and we consider the common zero set of $k$ polynomials on it. In this case we do not have to restrict to Kostlan polynomials and we can work with the more general class of random \emph{invariant} polynomials: these are centered Gaussian probability measure on $W_{n,d}$ which are invariant under the action of the orthogonal group by change of variables (of course the Kostlan measure is one of them). These measures have been classified by Kostlan \cite{Kostlan} and depend on $\lfloor \frac{d}{2}\rfloor$ many parameters. Consider now the common zero set $X$ of independent random \emph{invariant} polynomials $p_1, \ldots, p_k$ on $\Gamma$:
\be X= \Gamma\cap Z(p_1)\cap\cdots \cap Z(p_k).\ee
With probability one $X$ is zero-dimensional and we can use integral geometry (see \cite{howard} or the appendix of \cite{PSC}) to deduce that:
\be\label{eq:ig}
\mathbb{E}\#\left(\Gamma\cap Z(p_1)\cap\cdots \cap Z(p_k)\right)=\frac{|\Gamma|}{|\RP^k|}\prod_{j=1}^k\mathbb{E}\frac{|Z(p_j)|}{|\RP^{n-1}|}.\ee
The quantity $\mathbb{E}|Z(p)|$ appearing in \eqref{eq:ig} can be evaluated using the definition of the invariant distribution in terms of its weights (see \cite{Kostlan,FLL}); when $p$ is a Kostlan polynomial of degree $d$, then $\mathbb{E}|Z(p)|=\sqrt{d}|\RP^{n-1}|$. More generally (again by Integral Geometry) this expectation is bounded by $\mathbb{E}|Z(p)|\leq d|\RP^{n-1}|$. If each $p_i$ has now degree $d$, we can apply Markov again and deduce that there exists $c_{\Gamma}>0$ such that for any  invariant gaussian measure on the space of polynomials:
\be \mathbb{P}\{\#(\Gamma\cap Z(p_1)\cap\cdots \cap Z(p_k))\geq td^{n-1}\}\leq \frac{c_\Gamma}{t}.\ee
(The probability of deviating from a B\'ezout-type bound is small.)
\end{remark}

\section{Pathological examples: Proof of Theorem \ref{thm:arbitrary-intersection}}
\subsection{Some basic facts}For the next proof we will need a few elementary facts from differential topology and real algebraic geometry. First, if $D\subset\R^{n-1}$ is a disk and $f:\overline{D}\to\R$ is a regular function, we define:
\be \|f\|_{C^1(D, \R)}=\sup_{z\in D}\|f(z)\|+\sup_{z\in D}\|\nabla f(z)\|.\ee
If ``zero'' is a regular value of $f$, then $Z(f)$ is a regular hypersurface in $D$. If $Z \subset \R^{n-1}$ is a regular compact hypersuface we will write
\be (Z(f), D)\sim (Z, \R^{n-1})\ee
to denote that the two pairs $(Z(f), D)$ and $(Z, \R^{n-1})$ are diffeomorphic. In this setting there exists $\delta>0$ (depending on $f$) such that given any regular function $h:\overline{D}\to \R$ with $\|h\|_{C^1(D, \R)}\leq \delta$, ``zero'' is a regular value of $f+h$ and:
\be (Z(f+h), D)\sim (Z, \R^{n-1})\ee
(in particular the zero sets of $f$ and $h$ are diffeomorphic). We will (loosely) refer to this fact as \emph{Thom's isotopy Lemma}.

We will also need the followng classical approximation result from real algebraic geometry, due to Seifert \cite{Seifert}. Given a regular, compact hypersurface $Z\subset D\subset \R^{n-1}$, there exists a polynomial $q:\R^{n-1}\to \R$ such that ``zero'' is a regular value of $q$ and
\be (Z(q), D)\sim (Z, \R^{n-1}).\ee
This follows from Weirstrass' approximation Theorem; the reader can see \cite[Special case 5]{kollar} for an elementary proof of Seifert's result.

\begin{proof}[Proof of Theorem \ref{thm:arbitrary-intersection}] Let $e_1=(1, 0, \ldots, 0)\in \R^{n-1}$ and consider the two disks $D_1=D(e_1, \frac{1}{2})$ and $D_2=D(e_1, \frac{2}{3}).$

Pick $Z_1$ and consider a polynomial\footnote{We start with $q_2$ and not $q_1$, but the shift of the indices will be convenient to simplify the notation later.} $q_2$ such that:
\be (Z(q_2)\cap D_1, D_1)\sim (Z_1, \R^{n-1}).\ee
Observe that, since $\|x\|^2$ does not vanish on $D_1$,  ``zero'' is also a regular value for $Q_2=c_2\|x\|^2q_2|_{D_1}$ for every positive constant $c_2>0$, and:
\be (Z(Q_2)\cap D_1, D_1)\sim (Z_1, \R^{n-1}).\ee
(In the course of the proof we will pick a sequence of constants $\{c_k>0\}_{k\in \mathbb{N}}$ that will only be specified later.) 
Call $d_2$ the degree of $Q_2$ and observe that $Q_2$ only contains monomials $x_1^{\alpha_1}\cdots x_{n-1}^{\alpha_{n-1}}$ with $2\leq |\alpha|\leq d_2.$ (We set $d_1=1$.)

By Thom's isotopy Lemma, associated to the function $Q_2:D_1\to \R$ there is a $\delta_2>0$ such that for any other continuously differentiable function $h:D_1\to \R$ with $\|h\|_{C^1(D, \R)}\leq \delta_2$ we have that the equation $Q_2+h=0$ is regular on $D_1$ and the pair $(Z(Q_2+h)\cap D_1, D_1)$ is isotopic to the pair $(Z(Q_2)\cap D_1, D_1).$

Let now $k\geq 2$ and consider $Z_{d_k}$. Pick a polynomial $q_{k+1}$ such that ``zero'' is a regular value for $q_{k+1}|_{D_1}$ and:
\be (Z(q_{k+1})\cap D_1, D_1)\sim (Z_{d_k}, \R^{n-1}).\ee
As before, observe that ``zero'' is also a regular value for $Q_{k+1}=c_{k+1}\|x\|^{2d_k}q_{k+1}|_{D_1}$, for any positive constant $c_{k+1}>0$ and:
\be (Z(Q_{k+1})\cap D_1, D_1)\sim (Z_{d_k}, \R^{n-1}).\ee

Again, as before by Thom's isotopy Lemma, associated to the function $Q_{k+1}:D_1\to \R$ there is a $\delta_{k+1}>0$ such that for any other continuously differentiable function $h:D_1\to \R$ with $\|h\|_{C^1(D, \R)}\leq \delta_{k+1}$ we have that the equation $Q_{k+1}+h=0$ is regular on $D_1$ and the pair $(Z(Q_{k+1}+h)\cap D_1, D_1)$ is isotopic to the pair $(Z(Q_{k+1})\cap D_1, D_1).$ Moreover, calling $d_{k+1}=\deg(Q_{k+1})$, we have that $Q_{k+1}$ only contains monomials with total degree $2d_{k}\leq |\alpha|\leq d_{k+1}.$

We choose the sequence of constants $\{c_k>0\}$ at every step in such a way that
\be \|Q_{k+1}\|_{C^1(D_1, \R)}\leq \textrm{min}\{\delta_1, \ldots, \delta_k\}2^{-(k+1)}\ee
and that the power series $\sum_{k\geq 1}Q_k$ converges on the disk $D_2.$

Let now $\rho:\R^{n-1}\to [0, \infty)$ be a definable, smooth, cut-off function such that $\rho|_{D_1}\equiv 1$ and $\rho|_{D_2^c}\equiv 0$ and define the function $g:D^2\to \R$ by:
\be g(x)=\left(\sum_{k\geq2}Q_k(x)\right)\cdot \rho(x).\ee
We set $\hat{\Gamma}=\mathrm{graph}(g)\subset \R^n$ and extend this to a regular, compact definable manifold $\Gamma\subset \R^{n}.$ The set $D\subset \mathrm{graph}(g)\subset \Gamma$ will be the homeomorphic image of $D_1$ under the ``graph'' map $x\mapsto (x,g(x)).$

Let $P_1(x, y)=y$ and for every $k\geq 2$ define $P_k(x,y)=y-\sum_{j=2}^{k}Q_j(x).$ Observe that the degree of $P_k$ is $d_k$. For every $k\geq 1$ we consider now the (equivalent) systems of equations:
\be\{y -g(x)=0=
	P_k(x, y)\}\iff \bigg\{y -g(x)=0=
	Q_{k+1}(x)+\sum_{j\geq k+2}Q_j(x)=0\bigg\}
\ee
(the equivalence is obtained by eliminating $y$ from the second equation using the first one). The set of solutions to these systems in $D$ coincides with $Z(P_k)\cap D$.

Observe now that:
\be \bigg\|\sum_{j\geq k+2}Q_j\bigg\|_{C^1(D, \R)}\leq \sum_{j\geq k+2}\frac{\delta_k}{2^{j}}\leq \delta_k.\ee
In particular, since the equation $Q_{k+1}=0$ was regular on $D_1$, also the equation $Q_{k+1}+\sum_{j\geq k+2}Q_j=0$ is regular on $D_1$ and we have:
\be \bigg(Z\big(Q_{k+1}+\sum_{j\geq k+2}Q_j\big)\cap D_1, D_1\bigg)\sim (Z(Q_{k+1})\cap D_1, \sim D_1)\sim(Z_{d_k}, \R^{n-1}).\ee
As a consequence the system $\{y -g(x)=0=
	P_k(x, y)\}$ is regular on $D_1\times \R$ and under the graph map we have:
	\be (Z(P_k)\cap D, D)\sim (Z_{d_k}, \R^{n-1}).\ee
	Finally, let $p_k= {}^hP_k+R_k$ be a homogeneous polynomial (here ${}^hP_k$ denotes the homogenization) whose zero set is transverse to $\Gamma$ and with $\|R_k\|_{C^1(D, \R)}$ small enough such that 
	\be (Z(p_k)\cap D, D)\sim (Z_{d_k}, \R^{n-1}).\ee
	(The existence of such $R_k$ follows from the fact that  the set of homogeneous polynomials of a given degree whose zero set intersect $\Gamma$ transversely is dense).

\end{proof}

 \section{Estimates on the size of pathological examples: proof of Theorem \ref{thm:estimate}}
 Theorem \ref{thm:estimate} follows immediately from Proposition \ref{prop:bettimean} (proved below) after applying Markov's inequality.
In order to proceed we will need the following technical result. In the case $\Gamma$ is a real algebraic set this was proved by Gayet and Welschinger \cite{GaWe}. Our strategy of proof is also very similar, and it essentially uses the same ideas, just adapted to the non-algebraic setting. 
\begin{proposition}\label{thm:critical}Let $\Gamma\subset \mathbb{R}\emph{P}^n$ be a regular, compact hypersurface and $f:\Gamma\to \R$ be a Morse function. Let $p$ be a random Kostlan distributed polynomial on $\mathbb{R}\emph{P}^{n}$ of degree $d$. Then, denoting by $Q_{n-2}$ a \emph{GOE}$(n-2)$ matrix, we have:
\begin{equation}\label{eq:critE}\mathbb{E}\#\{\textrm{critical points of $f|_{\Gamma\cap Z(p)}$}\}=\frac{|\Gamma|}{\pi} \frac{d^{\frac{n-1}{2}}}{(2\pi)^{\frac{n-2}{2}}}\cdot  \mathbb{E}|\det Q_{n-2}|+O(d^{\frac{n-2}{2}}).\end{equation}
\end{proposition}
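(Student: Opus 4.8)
The plan is to compute the expected number of critical points of $f|_{\Gamma \cap Z(p)}$ via the integral (Kac--Rice) formula, following the strategy of Gayet--Welschinger \cite{GaWe} adapted to the definable setting. First I would set up the incidence manifold: a point $x \in \Gamma$ is a critical point of $f|_{\Gamma\cap Z(p)}$ precisely when $p(x)=0$ and the differential $d_x(f|_\Gamma)$ is proportional to $d_x(p|_\Gamma)$ (at points where $x$ is a regular point of $Z(p)\cap \Gamma$, which happens with probability one). After choosing, near a fixed $x\in\Gamma$, local coordinates in which $\Gamma$ is a graph and $f$ restricted to $\Gamma$ is linearized, this becomes a system of $n$ equations (one from $p(x)=0$, $n-1$ from the proportionality of gradients in the tangent space to $\Gamma$) in the $n-1$ local coordinates on $\Gamma$ — wait, the correct bookkeeping is that we have $n-1$ coordinates on $\Gamma$ and the vanishing locus of $n-1$ functions built from $1$-jets of $p$. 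So I would write the expected count as $\int_\Gamma \rho(x)\, d\mathrm{vol}_\Gamma(x)$ where $\rho(x)$ is a Kac--Rice density: the conditional expectation of $|\det(\text{Hessian-type matrix})|$ given the relevant jet of $p$ vanishes, times the Gaussian density of that jet at $0$.

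The key computational input is the covariance structure of the Kostlan ensemble: for the Kostlan polynomial of degree $d$, the $1$-jet and $2$-jet of $p$ at a point $x$, measured in a suitably rescaled orthonormal frame, have covariances governed by the reproducing kernel $\langle x,y\rangle^d$ of $W_{n,d}$, and after rescaling by $\sqrt d$ the Hessian of $p|_\Gamma$ (conditioned on $p(x)=0$ and $\nabla p(x)$ fixed) behaves asymptotically like a GOE matrix — this is exactly the mechanism that produces the factor $\mathbb{E}|\det Q_{n-2}|$. Concretely, the relevant $(n-2)\times(n-2)$ random matrix arises as the Hessian of the restriction of $p$ to the $(n-2)$-dimensional tangent space of $\Gamma\cap Z(p)$ inside $\Gamma$; conditioning and rescaling turns it into a Gaussian symmetric matrix whose law converges (as $d\to\infty$) to $\mathrm{GOE}(n-2)$ up to lower-order corrections. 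The powers of $d$ come out by counting: each of the $n$ scalar conditions contributes a factor $\sqrt d$ from the Gaussian density evaluated at $0$ after rescaling — actually, the leading power $d^{(n-1)/2}$ is obtained as the product of the rescaling Jacobians, and the normalization constants $\pi$ and $(2\pi)^{(n-2)/2}$ are precisely those of the Kac--Rice density for this ensemble; I would match them against the algebraic computation in \cite{GaWe} rather than re-derive every constant.

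The main obstacle — and the reason the statement is phrased as ``our strategy is similar, adapted to the non-algebraic setting'' — is that $\Gamma$ is only a regular ($C^k$), compact, possibly non-algebraic definable hypersurface, so one cannot use the homogeneity and exact algebraic identities that simplify the Gayet--Welschinger computation when $\Gamma$ is algebraic. I would handle this by working locally and using the fact that the Kostlan ensemble, viewed through its covariance kernel $E_d(x,y) = \langle x,y\rangle^d$, has a universal local scaling limit (the Bargmann--Fock field) that does not see the global geometry of $\Gamma$: near each $x\in\Gamma$, after rescaling coordinates by $\sqrt d$, the restricted field $p|_\Gamma$ converges to a universal Gaussian field, and the curvature of $\Gamma$ enters only through the $O(d^{(n-2)/2})$ correction terms. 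Establishing this scaling limit uniformly in $x\in\Gamma$ (so that one may integrate the local densities over $\Gamma$ and get a bound with the error term $O(d^{(n-2)/2})$ depending on $\Gamma$ through bounds on its curvature) is the technical heart; it requires the regularity hypothesis on $\Gamma$ and careful control of the conditional Gaussian densities and their derivatives. Once this uniform local estimate is in place, integrating $\rho(x)$ over $\Gamma$ yields the factor $|\Gamma|$ and the claimed asymptotic \eqref{eq:critE}.
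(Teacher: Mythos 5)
Your proposal is correct and follows essentially the same route as the paper: a Kac--Rice computation for the $(n-1)$-component random map encoding ``$p=0$ and $d_xp$ proportional to $d_xf$ on $T_x\Gamma$'', with the leading constant produced by the conditional expectation of the determinant of a Hessian-type matrix that, after rescaling by $\sqrt{d(d-1)}$, is exactly (not just asymptotically) a $\mathrm{GOE}(n-2)$ matrix, and with the curvature of $\Gamma$ (the second fundamental form term $M(\overline{x})/\sqrt{d-1}$) absorbed into the $O(d^{(n-2)/2})$ error. The only substantive difference is that where you invoke the Bargmann--Fock scaling limit and propose to import the constants from Gayet--Welschinger, the paper instead uses orthogonal invariance of the Kostlan ensemble to reduce the density to a single point $[1,0,\dots,0]$, where the relevant jets of $p$ are explicit independent combinations of the Kostlan coefficients $\xi_\alpha$, so the constant $\frac{1}{\pi}(2\pi)^{-\frac{n-2}{2}}\mathbb{E}|\det Q_{n-2}|$ is derived directly rather than matched.
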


\begin{remark}Note that in the case $\dim \Gamma=1$ this can be obtained by a simple application of integral geometry.
\end{remark}
\begin{proof}
We will use the Kac-Rice formula for Riemannian manifolds. Since the involution $x\mapsto -x$ on the sphere with the round metric is an isometry, the quotient map $q:S^n\to \RP^n$ induces a Riemaniann metric on $\RP^n$ for which $q$ is a Riemannian submersion. In this way $\Gamma\subset \RP^n$ inherits a Riemannian metric as well. For every point $y\in\Gamma$ such that $d_yf\neq 0$ (since $\Gamma$ is compact and $f:\Gamma\to \R$ is Morse, there are only finitely many points where $d_yf$ vanishes) we consider an \emph{orthonormal frame field} $\{v_1, \ldots, v_{n-1}\}$ on a neighborhood $V\subset \Gamma$ of $y$ such that for all $x\in V$
\begin{equation} \ker d_x f=\textrm{span}\{v_2(x), \ldots, v_{n-1}(x)\}.\end{equation}

Let us take now an open set $V\subset \Gamma$  which is contained in the open set $\{x_0\neq 0\}\subset \RP^n$ (this is true after possibly shrinking $V$ and relabeling the homogeneous coordinates $[x_0, \ldots, x_n]$ in $\RP^n$). Let $\tilde{p}:\{x_0\neq 0\}\to \R$ be the random function defined by $\tilde p(x_0, x_1, \ldots, x_n)=p(1, x_1/x_0, \ldots, x_n/x_0)$ and denote by $\hat{p}$ its restriction to $V$: $\hat{p}=\tilde{p}|_{V}$ (thus $\hat{p}$ is a random function on the riemannian manifold $V\subset \Gamma\subset \RP^n$). Define the random map $F:V\to \R^{n-1}$ by:
\begin{equation} F(x)=(\hat{p}(x), d_x\hat p v_2(x), \ldots, d_x\hat pv_{n-1}(x)).\end{equation}
If the gradient of $\hat p$ does not vanish on $ \{p=0\}\cap V$ (this happens with probability one), then $\{p=0\}\cap V=\{\hat{p}=0\}$ is a smooth submanifold of $\Gamma$.  We claim that, with probability one, the number of critical points of $f|_{\{p=0\}\cap \Gamma}$ in $V$ equals the number of zeroes of $F$. In fact, with probability one, none of the critical points of $f$ lies on $\{p=0\}$ and in this case a point $x\in V$ is critical for $f|_{\{p=0\}\cap \Gamma}$ if and only if $\hat p(x)=0$ and the gradients of $\hat p$ and $f$ are collinear at $x$, i.e. $\hat{p}(x)=0$ and $\ker d_x \hat{p}=\ker d_x f$, which is equivalent to $F(x)=0$.

Let us denote by $\omega$ the volume density of $\Gamma$. Then the Kac-Rice formula for the random field $F$ on the Riemannian manifold $\Gamma\cap V$~\cite{adler2009random} tells that for any open set $W\subset V$
\begin{align} \mathbb{E}\#\{F=0\}\cap W&=\int_{W}\mathbb{E}\left\{|\det J(x)|\,\bigg|\, F(x)=0\right\} \rho_{F(x)}( 0) \omega(x) dx\\
&=\int_{W}\rho(x) \omega(x) dx.\end{align}
where the matrix $J(x)$ is the matrix of the derivatives at $x$ of the components of $F$ with respect to an orthonormal frame (in our case the chosen frame $v_1, \ldots, v_{n-1}$ and $\rho_{F(x)}(0)$ is the density at zero of the random vector $F(x).$

We use now the fact that the Kostlan polynomial $p$ is invariant by an orthogonal change of variable in $\RP^n$, hence for every $x\in V$ for the evaluation of 
$$\rho(x)=\mathbb{E}\left\{|\det J(x)|\,\bigg|\, F(x)=0\right\} \rho_{F(x)}( 0) $$
we can assume $x=[1, 0, \ldots, 0]=\overline{x}$. For simplicity let us also denote by $t_1, \ldots, t_n:\{x_0\neq 0\}\to \R$ the functions $t_i=x_i/x_0$.
 Then, since the stabilizer $O(n)$ of $\overline{x}$ acts transitively on the set of frames at $\overline x$, we can also assume that $\{v_1(\overline x), \ldots, v_{n-1}( \overline x)\}=\{\partial_1( \overline x), \ldots, \partial_{n-1}( \overline x)\}$, where we have denoted by $\partial_i$ the vector field $\partial/\partial t_i$.

For the calculation of the value of $\rho(\overline x)$ we use local coordinates on $\Gamma\cap V$. Note that $(t_1, \ldots, t_{n-1})$ are coordinates on $\Gamma\cap V$ (this is because the tangent space of $\Gamma$ at $\overline x$ equals $\textrm{span}\{\partial_1(\overline x), \ldots, \partial_{n-1}(\overline x)\}$). 
We denote by $\psi^{-1}:\{x_0\neq0\}\to \R^n$ the coordinate chart on $\{x_0\neq 0\}\subset \RP^n$ given by $(t_1, \ldots, t_n)$. In this chart $\psi^{-1}(\Gamma\cap V)$, for a small enough $V$ containing $\overline x$, can be seen as the graph of a function $g:\R^{n-1}\to \R$. Since the tangent space of $\psi^{-1}(\Gamma\cap V)$ at zero equals $\textrm{span}\{\partial_1, \ldots, \partial _{n-1}\}$, the function $g$ vanishes at zero, together with its differential. In this way we get a map $\varphi:\R^{n-1}\to \RP^n$ parametrizing $V$ given by:
\be \varphi(t_1, \ldots, t_{n-1})=\psi(t_1, \ldots, t_{n-1}, g(t_1, \ldots, t_{n-1})).\ee

Observe now that the frame $\{v_1, \ldots, v_{n-1}\}$ coincides with $\{\partial_{1}, \ldots, \partial_{n-1}\}$ only at zero; neverthless, it is easy to verify that we could pick the frame $\{v_1, \ldots, v_{n-1}\}$ such that in these coordinates:
\be v_i(t)=(1+t_i)\partial_i+O(\|t\|^2)\quad i=1, \ldots, n-1.\ee
In particular, denoting by $ t=(t_1, \ldots, t_{n-1})$, we have:
\be (d\hat pv_i)(t)=(1+t_i)\partial_{i}\hat p(t, g(t))+(1+t_i)\partial_n\hat p(t, g(t))\partial_ig(t)+O(\|t\|^2).\ee
From this it is immediate to see that:
\begin{align} F(\overline{x})&=\left(\hat p(0), (d\hat pv_1)(0), \ldots, (d\hat pv_{n-1})(0)\right)\\
&=(p_{d, 0, \ldots, 0}, p_{d-1, 0,1 0, \ldots, 0}, \ldots, p_{d-1, 0, \ldots, 0,1, 0, \ldots 0})\end{align}
(in the multi-index of the $i$-th entry of this vector the $1$ is in position $i+1$). In particular:
$$\rho_{F( \overline x)}(0)=\frac{1}{(2\pi)^{\frac{n-1}{2}}d^{\frac{n-2}{2}}}$$
Let us evaluate now the matrix $J( \overline x)$. 
For the first row $r_1( \overline x)$ of $J( \overline x)$ we immediately obtain:
$$r_1(\overline x)=(d_0\hat{p}v_1(0), \ldots, d_0\hat{p}v_{n-1}(0))=(\xi_{d-1, 1, \ldots, 0}, \xi_{d-1, 0,1 0, \ldots, 0}, \ldots, \xi_{d-1, 0, \ldots, 0,1})$$
Note that, except for the first entry, $r_1(\overline x)$ coincides with $F(\overline x)$; we denote by
\be w=(\xi_{d-1, 0,1 0, \ldots, 0}, \ldots, \xi_{d-1, 0, \ldots, 0,1})\ee
(i.e. the vector consisting of the last entries of the first row $r_1(\overline x)$).

Let us now look at the $(n-2)\times (n-2)$ submatrix $\hat{J}(\overline x)$ of $ J(\overline x)$, obtained by removing the first row and the first column. Observe that $\hat{J}(\overline x)=B+\xi_{d-1, 1,0, \ldots, 0}M(\overline{x})$, where $B$ is the matrix:
\be\label{eq:GOE} B= \left(\begin{array}{cccc}2\xi_{d-2, 0, 2,0, \ldots,0 } & \xi_{d-2, 0, 1, 1, 0, \ldots, 0} & \cdots & \xi_{d-2, 0, 1, 0\ldots, 0, 1} \\\xi_{d-2, 0, 1, 1, 0, \ldots, 0} & 2\xi_{d-2, 0, 0, 2, 0, \ldots, 0} & \cdots & \xi_{d-2, 0, 0, 1, 0, \ldots, 0, 1} \\\vdots &  &  &  \\\xi_{d-2, 0, 1, 0\ldots, 0, 1} & \xi_{d-2, 0, 1, 0, \ldots,0, 1} & \cdots & 2\xi_{d-2, 0, \ldots, 0, 2}\end{array}\right),\ee
and $M(\overline x)=(\partial_i\partial_j g(0))$.
From \eqref{eq:GOE} it is immediate to see that the matrix $B$ is a random matrix distributed as:
\begin{align}B=\sqrt{d(d-1)}Q_{n-2}\end{align}
where $Q_{n-2}$ is a random GOE$(n-2)$ matrix.
Hence
\be J(\overline{x})=\left(\begin{array}{c|c}\xi_{d-1, 1, 0, \ldots, 0} & w \\\hline * & B+\xi_{d-1, 1,0, \ldots, 0}M(\overline x)\end{array}\right)
\ee From this it follows that:
\begin{align}\mathbb{E}\left\{|\det J(\overline x)|\,\bigg|\, F(\overline x)=0\right\}&=\mathbb{E}\left\{|\det J(\overline x)|\,\bigg|\, w=0\right\}\\
&=(d(d-1))^{\frac{n-2}{2}}\mathbb{E}\left\{|\xi_{d-1, 1, 0, \ldots, 0}|\cdot |\det\left( Q_{n-2} +\frac{M(\overline x)}{\sqrt{d-1}}\right)|\,\bigg|\, w=0\right\} \\
&=(d(d-1))^{\frac{n-2}{2}}\cdot d^{\frac{1}{2}}\sqrt{\frac{2}{\pi}}\mathbb{E}\left|\det\left( Q_{n-2} +\frac{M(\overline x)}{\sqrt{d-1}}\right)\right|=(*),
\end{align}
where in the last step we have used the fact that the random variables $w$, $\xi_{d-1, 1, 0, \ldots, 0}$and $Q_{n-2}$ and $\xi_{d-1, 1, 0, \ldots, 0}$ are independent. Note now that, by construction, the matrix $M(\overline x)$ depends continuously on $\overline x\in \Gamma$, because we have assumed that $\Gamma$ is of regularity class $C^k$ with $k\geq 2$, and since $\Gamma$ is compact:
\be (*)=(d(d-1))^{\frac{n-2}{2}}\cdot d^{\frac{1}{2}}\sqrt{\frac{2}{\pi}}\left(\mathbb{E}|\det( Q_{n-2})|+O(d^{-1/2})\right).\ee
Putting all this together we obtain:
\begin{align} \mathbb{E}\#\{F=0\}\cap W&=\int_{W}\mathbb{E}\left\{|\det J(x)|\,\bigg|\, F(x)=0\right\} \rho_{F(x)}( 0) \omega(x) dx\\
&=\int_{W}\frac{(d(d-1))^{\frac{n-2}{2}}}{(2\pi)^{\frac{n-1}{2}}d^{\frac{n-2}{2}}}\cdot d^{\frac{1}{2}}\sqrt{\frac{2}{\pi}}\left(\mathbb{E}|\det( Q_{n-2})|+O(d^{-1/2})\right)\omega(x) dx\\
&=\frac{|W|}{\pi} \frac{d^{\frac{n-1}{2}}}{(2\pi)^{\frac{n-2}{2}}}\cdot  \mathbb{E}|\det Q_{n-2}|+O(d^{\frac{n-2}{2}}).\end{align}
From this the conclusion follows.
\end{proof}
In particular, since $f|_{\Gamma\cap\{p=0\}}$ is Morse with probability one (using standard arguments from differential topology it is not difficult to show that the set of such polynomials for which $f|_{\Gamma\cap\{p=0\}}$ is Morse has full measure), applying Morse's inequalities, we get the following corollary. \begin{proposition}\label{prop:bettimean}
There exists a universal constant $c_{k,n}>0$ such that
\be \mathbb{E}b_k(\Gamma\cap Z(p))\leq |\Gamma| c_{k, n}d^{\frac{n-1}{2}}+O(d^{\frac{n-2}{2}})\ee
(the implied constants in the $O(d^{\frac{n-2}{2}})$ depends on $\Gamma$).
\end{proposition}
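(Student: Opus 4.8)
The plan is to deduce Proposition \ref{prop:bettimean} from Proposition \ref{thm:critical} by a standard Morse-theoretic argument, together with an integral-geometric bound to pin down the dependence on $\Gamma$. First I would fix once and for all a Morse function $f:\Gamma\to\R$ — for instance the restriction to $\Gamma$ of a generic linear function on $S^n$ descended to $\RP^n$, or more robustly a generic ``height'' function — and observe that for a set of polynomials $p$ of full Kostlan measure the set $X_p=\Gamma\cap Z(p)$ is a smooth, compact $(n-2)$-dimensional submanifold of $\Gamma$ on which $f|_{X_p}$ is a Morse function with no critical point lying on any critical point of $f$ itself. This is the genericity statement alluded to in the text: the conditions defining it are the transversality of $Z(p)$ to $\Gamma$, the non-vanishing of $\nabla \hat p$ on $X_p$, and the non-degeneracy of the Hessian of $f|_{X_p}$ at each of its critical points, each of which is the complement of a measure-zero set by the parametric transversality theorem applied to the evaluation map on the (finite-dimensional) space $W_{n,d}$.

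Next, on this full-measure set, Morse's inequalities give $b_k(X_p)\le c_k(f|_{X_p})$, the number of critical points of $f|_{X_p}$ of index $k$, and in particular
\be b_k(X_p)\le \#\{\textrm{critical points of } f|_{X_p}\}.\ee
Taking expectations and invoking Proposition \ref{thm:critical} yields
\be \Exp b_k(\Gamma\cap Z(p))\le \Exp\#\{\textrm{critical points of } f|_{\Gamma\cap Z(p)}\}=\frac{|\Gamma|}{\pi}\frac{d^{\frac{n-1}{2}}}{(2\pi)^{\frac{n-2}{2}}}\cdot \Exp|\det Q_{n-2}|+O(d^{\frac{n-2}{2}}).\ee
Setting $c_{k,n}=\frac{1}{\pi(2\pi)^{\frac{n-2}{2}}}\Exp|\det Q_{n-2}|$ — a quantity depending only on $k$ and $n$ (in fact only on $n$; one can absorb a harmless factor if one wishes the statement to be literally indexed by $k$), and finite since the GOE determinant has all moments — gives exactly the claimed bound $\Exp b_k(\Gamma\cap Z(p))\le |\Gamma|c_{k,n}d^{\frac{n-1}{2}}+O(d^{\frac{n-2}{2}})$, with the constant in the error term inherited from Proposition \ref{thm:critical} and hence depending on $\Gamma$ (through the $C^2$-geometry of $\Gamma$, i.e. the second fundamental form data encoded in the matrices $M(\overline x)$).

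The one genuine subtlety — and the step I expect to be the main obstacle — is not the algebra but making the Morse-theory input rigorous and uniform: one must ensure that a single Morse function $f$ works simultaneously for almost every $p$, and that the ``bad'' set of $p$ (where $X_p$ is singular, or $f|_{X_p}$ fails to be Morse, or a critical point collides with $Z(p)$) really is Kostlan-null. Since $W_{n,d}$ is finite-dimensional and the Kostlan measure is mutually absolutely continuous with Lebesgue measure on it, it suffices to show these bad sets are semialgebraic (or at least definable) of positive codimension; this follows from Sard's theorem applied to the jet map $W_{n,d}\times\Gamma\to (\text{jets})$, exactly as in the algebraic case treated by Gayet–Welschinger \cite{GaWe}, whose argument transfers verbatim since $\Gamma$'s lack of algebraicity plays no role here — only $\Gamma$'s being a compact $C^k$ manifold and $p$ ranging over a finite-dimensional space matter. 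Once this is in place the rest is the bookkeeping above, and the proposition follows.
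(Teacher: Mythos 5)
Your proposal is correct and follows essentially the same route as the paper: the authors likewise fix a Morse function $f$ on $\Gamma$, note that $f|_{\Gamma\cap Z(p)}$ is Morse for a full-measure set of polynomials, apply Morse's inequalities to bound $b_k$ by the number of critical points, and then take expectations using Proposition \ref{thm:critical}. Your explicit identification of $c_{k,n}$ and your discussion of the genericity argument are consistent with (and somewhat more detailed than) the paper's one-line justification.
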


\bibliographystyle{amsplain}

\bibliography{rp}

\end{document}